\newcommand{\specificthanks}[1]{\@fnsymbol{#1}}
\newcommand\blfootnote[1]{%
	\begingroup
	\renewcommand\thefootnote{}\footnote{#1}%
	\addtocounter{footnote}{-1}%
	\endgroup
}
\title{Local Lift Dependence Scale}
\date{}
\author{Diego Marcondes\thanks{Instituto de Matem\'atica e Estat\'istica, Universidade de S\~ao Paulo, Brazil}
	\textsuperscript{, \specificthanks{2}}\blfootnote{E-mail: dmarcondes@ime.usp.br}\blfootnote{D. Marcondes has received financial support from CNPq during the development of this paper.} \and Adilson Simonis\footnotemark[1]}
\newtheorem{theorem}{Theorem}
\newtheorem{lemma}{Lemma}
\newtheorem{proposition}{Proposition}
\newtheorem{remark}{Remark}
\newtheorem{example}{Example}
\newenvironment{proof}{\paragraph{Proof:}}{\hfill$\square$}
\begin{document}
	\maketitle
	
	\begin{abstract}
		We propose a local and general dependence quantifier between two random variables $X$ and $Y$, which we call Local Lift Dependence Scale, that does not assume any form of dependence (e.g., linear) between $X$ and $Y$, and is defined for a large class of random variables, singular and absolutely continuous w.r.t Lebesgue measure. We argue that this dependence scale is more general and suitable to study variable dependence than other specific local dependence quantifiers and global dependence coefficients, as the Mutual Information. An outline of how this dependence scale may be useful in branches of applied probability and topics for future research are presented.
		
		\textbf{Keywords:} local dependence; mutual information; Hausdorff measure; dependence scale
	\end{abstract}
	
	\section{Introduction}
	\label{Sec1}
	
	The Mutual Information between two random variables $X$ and $Y$, defined in $(\Omega,\mathcal{F},\mathbb{P})$, is given by
	\begin{equation*}
	I(X,Y) = \mathbb{E}\Bigg[\log \frac{d\mathbb{P}(X,Y)}{d\big(\mathbb{P}(X) \times \mathbb{P}(Y)\big)} \Bigg]	
	\end{equation*}
	in which $d\mathbb{P}(X,Y)/d(\mathbb{P}(X) \times \mathbb{P}(Y))$ is the Radon-Nikodym derivative of the joint measure $\mathbb{P}(X,Y)$ with respect to the product measure $\mathbb{P}(X) \times \mathbb{P}(Y)$, when $\mathbb{P}(X,Y) \ll \mathbb{P}(X) \times \mathbb{P}(Y)$. The Mutual Information quantifies the mass concentration of the joint distribution of $X$ and $Y$ and was first proposed for random variables absolutely continuous w.r.t. counting measure by \cite{shannon}. As more dependent the variables are, the more concentrated is the mass on their joint distribution, so that Mutual Information also quantifies dependence. Indeed, the Mutual Information assesses \textit{any} kind of dependence, which makes it a global and general dependence coefficient.
	
	Although the Mutual Information is quite general, it does not detail the form of dependence between the variables as, for example, even though a high value of $I(X,Y)$ implies that $X$ and $Y$ are \textit{highly dependent}, it does not present any evidences about the \textit{kind} of dependence being observed. Therefore, the Mutual Information as a general and global dependence coefficient is not enough to assess all the nuances of the dependence between variables, so that it is necessary to apply other tools in order to not only point out the existence of a dependence, but also characterize the \textit{kind} of dependence being observed. 
	
	An important tool for assessing the form of dependence between two random variables are local dependence quantifiers which associate a value to each point in the support of the variables assessing their pointwise dependence. Local dependence quantifiers give a wide view of the relation between random variables, without summarizing it to an index. As examples of local dependence quantifiers we have the measure of linear local dependence of \cite{bairamov}, the curve of correlation of \cite{bjerve} and the function of local dependence of \cite{holland}. Another local dependence quantifier is the \textit{Sibuya's dependence function}.
	
	The dependence between random variables may be expressed by \textit{Sibuya's dependence function} $\Omega^{*}(x,y)$, proposed by \cite{sibuya}, and given by the relation $F(x,y) = \Omega^{*}(x,y)G(x)H(y)$, in which $F$ is the joint and $G$ and $H$ are the respective marginal cumulative distributions of two random variables $X$ and $Y$ defined in a same probability space. From $\Omega^{*}(x,y)$ it may be established, for example, if the random variables are independent, i.e., $\Omega^{*} \equiv 1$. However, as $\Omega^{*}(x,y)$ expresses the relation between the probability of the events $\{X \leq x,Y \leq y\}$, $\{X \leq x\}$ and $\{Y \leq y\}$, it is not sharp enough to characterize local dependence meaningfully. On the other hand, in order to asses the dependence being outlined by the Mutual Information, we may study a local dependence quantifier given by the Lift Function which is defined as the Radon-Nikodym derivative
	\begin{align}
	\label{lift_introduction}
	L(x,y) = \frac{d\mathbb{P}(X,Y)}{d\big(\mathbb{P}(X) \times \mathbb{P}(Y)\big)}(x,y)
	\end{align} 
	when it is defined, and then $I(X,Y) = \mathbb{E}[ \log L(X,Y)]$. 
	
	The main interest in studying the Lift Function is in determining for which values of $(x,y)$ $L(x,y) < 1$, for which $L(x,y) = 1$ and for which $L(x,y) > 1$. Indeed, when $(X,Y)$ is absolutely continuous w.r.t. counting measure, $L(x,y) < 1$ implies $\mathbb{P}(X = x \mid Y = y) < \mathbb{P}(X = x)$ and $\mathbb{P}(Y = y \mid X = x) < \mathbb{P}(Y = y)$, and we say that $\{Y = y\}$ \textit{inhibits} $\{X = x\}$, for the fact that $Y$ being equal to $y$ decreases the probability of $X$ being equal to $x$, and vice versa ($\{X = x\}$ \textit{inhibits} $\{Y = y\}$). Analogously, we have that $L(x,y) > 1$ implies $\mathbb{P}(X = x \mid Y = y) > \mathbb{P}(X = x)$ and $\mathbb{P}(Y = y \mid X = x) > \mathbb{P}(Y = y)$, and we say that $\{Y = y\}$ \textit{lifts} $\{X = x\}$, for the fact that $Y$ being equal to $y$ increases the probability of $X$ being equal to $x$, and vice versa ($\{X = x\}$ \textit{lifts} $\{Y = y\}$). For simplicity, we call $L$ the Lift Function and, for a fixed pair $(x,y)$, it may be interpreted from both sides: as $Y$ lifting $X$ or $X$ lifting $Y$. When $L(x,y) = 1$ we have that $\mathbb{P}(X = x \mid Y = y) = \mathbb{P}(X = x)$ and $\mathbb{P}(Y = y \mid X = x) = \mathbb{P}(Y = y)$, so that knowing that $\{X = x\}$ does not change the probability of $\{Y = y\}$, and vice versa. 
	
	From the study of the Lift Function it is possible to understand the dependence outlined by $I(X,Y)$ by observing the \textit{lifting pattern} of $X$ and $Y$. Furthermore, even if $X$ and $Y$ are not absolutely continuous w.r.t. Lebesgue measure, nor w.r.t. counting measure, the Lift Function still represents the dependence between $X$ and $Y$, as it compares pointwise the joint distribution of $X$ and $Y$ with their distribution in case they were independent. Thus, the Mutual Information and the Lift Function together present a wide view of the dependence between random variables: while the former assesses the existence of a dependence, the latter expresses the kind of dependence being observed. Therefore, in order to understand the dependence of a joint probability distribution, one can quantify it globally and locally to capture all its nuances. 
	
	However, the Lift Function is not well defined for all random variables $(X,Y)$, since if $\mathbb{P}(X,Y) \not\ll \mathbb{P}(X) \times \mathbb{P}(Y)$, then the Radon-Nikodym derivative (\ref{lift_introduction}) is not defined. Indeed, this is an issue, for there are some cases of interest such that there is no definition for the Lift Function. For example, let $\{(X_{n},Y_{n})\}_{n \geq 1}$ be random variables with the Standard Bivariate Normal Distribution with correlation $r_{n}$, such that $r_{n} \xrightarrow{n \rightarrow \infty} 1$. Then, $(X_{n},Y_{n})$ converges in law to $(X,Y)$, in which $X$ has the Standard Normal Distribution and $Y = X$ with probability $1$, but the Lift Function is not defined for $(X,Y)$. In fact, if $Y = \varphi(X)$, in which $X$ is absolutely continuous w.r.t. Lebesgue measure and $\varphi$ is a real-valued smooth function, then definition (\ref{lift_introduction}) of the Lift Function does not apply to $(X,Y)$.
	
	In this paper we propose a general definition for the Lift Function that contemplates a larger class of distributions, even when $\mathbb{P}(X,Y) \not\ll \mathbb{P}(X) \times \mathbb{P}(Y)$, and which reduces to (\ref{lift_introduction}) when $\mathbb{P}(X,Y) \ll \mathbb{P}(X) \times \mathbb{P}(Y)$. Furthermore, we argue that the Mutual Information may not be a suitable scale for measuring variable dependence, since, if $(X_{n},Y_{n})$ converges in law to $(X,Y)$, then $I(X_{n},Y_{n})$ may not converge to $I(X,Y)$. In fact, $Y$ may be equal to $\varphi(X)$ with probability $1$, but $I(X,Y) < I(X_{n},Y_{n})$ for infinitely many $n$. This fact shows that in order to understand the form of dependence between two variables it is necessary to study it locally, and in general, without restricting our attention to an specific kind of dependence.
	
	In Section \ref{Sec2} we define the Lift Function and study its properties when $\mathbb{P}(X,Y) \ll \mathbb{P}(X) \times \mathbb{P}(Y)$. In Section \ref{Sec3} we develop the Lift Function for the case in which $\mathbb{P}(X)$ and $\mathbb{P}(Y)$ are absolutely continuous w.r.t. Lebesgue measure in $\mathbb{R}$, but $\mathbb{P}(X,Y) \not\ll \mathbb{P}(X) \times \mathbb{P}(Y)$. Our generalization is closely related to the Hausdorff dimension of the support of the singular part of $\mathbb{P}(X,Y)$. In Section \ref{Sec4} we argue that the Mutual Information is not enough to assess the dependence between two random variables, and in Section \ref{Sec5} we present our final remarks and topics for future research. 
	
	\section{Lift Function}
	\label{Sec2}
	
	Let $(X,Y)$ be real-valued random variables defined in $(\Omega,\mathcal{F},\mathbb{P})$ and let $(\mathbb{R},\beta_{\mathbb{R}})$ and $(\mathbb{R}^{2},\beta_{\mathbb{R}^{2}})$ be the usual measurable spaces, in which $\beta_{\mathbb{R}}$ and $\beta_{\mathbb{R}^{2}}$ are the Borelians of $\mathbb{R}$ and $\mathbb{R}^{2}$, respectively. Let $\mu$ be a $(\mathbb{R}^{2},\beta_{\mathbb{R}}^{2})$-probability measure such that $\mu(A) = \mathbb{P}((X,Y) \in A), \forall A \in \beta_{\mathbb{R}^{2}}$. Define $\mu_{X}$, $\mu_{Y}$ as $\mu_{X}(B) = \mu(B \times \mathbb{R}), \mu_{Y}(B) = \mu(\mathbb{R} \times B), \forall B \in \beta_\mathbb{R}$, and $\mu_{XY}$ as the only $(\mathbb{R}^{2},\beta_{\mathbb{R}^{2}})$-probability measure which satisfies $\mu_{XY}(A \times B) = \mu_{X}(A)\mu_{Y}(B)$ for all $A, B \in \beta_{\mathbb{R}}$. 
	
	The probability measure $\mu$ in $(\mathbb{R}^{2},\beta_{\mathbb{R}^{2}})$ is induced by measurable vector function $(X,Y)$; $\mu_{XY}$ is the product measure induced in $(\mathbb{R}^{2},\beta_{\mathbb{R}^{2}})$ by measurable functions $X$ and $Y$; and $\mu_{X}$ and $\mu_{Y}$ are the measures induced in $(\mathbb{R},\beta_{\mathbb{R}})$ by measurable functions $X$ and $Y$, respectively. Throughout this section we suppose that $\mu \ll \mu_{XY}$. We denote $\mathscr{L}_{1}$ and $\mathscr{L}_{2}$ the Lebesgue measures in $(\mathbb{R},\beta_{\mathbb{R}})$ and $(\mathbb{R}^{2},\beta_{\mathbb{R}^{2}})$, respectively.
	
	We define a Lift Function $L_{(X,Y,\mathbb{P})}: \mathbb{R}^{2} \rightarrow \mathbb{R}_{+}$ of $(X,Y)$ under $\mathbb{P}$ as the Radon-Nikodym derivative of $\mu$ with respect to $\mu_{XY}$, that is
	\begin{align*}
	L_{(X,Y,\mathbb{P})}(x,y) = \frac{d \mu}{d\mu_{XY}}(x,y) & & (x,y) \in \mathbb{R}^{2}
	\end{align*}
	which is a $\beta_{\mathbb{R}^{2}}$-measurable function that satisfies
	\begin{align}
	\label{LF}
	\int\displaylimits_{A} L_{(X,Y,\mathbb{P})}(x,y) \ d\mu_{XY}(x,y) = \mu(A) & & \forall A \in \beta_{\mathbb{R}^{2}}.
	\end{align}
	
	When there is no confusion about which random variables $L_{(X,Y,\mathbb{P})}$ refers to, it will be denoted simply by $L_{\mathbb{P}}$, or $L$ if there is also no doubt about under which probability measure it is defined. The Lift Function $L_{(X,Y,\mathbb{P})}$ is unique up to a $\mu_{XY}$ null set, is also $\mathcal{F}$-measurable and is well-defined as $\mu \ll \mu_{XY}$. From now on we consider $L_{(X,Y,\mathbb{P})}$ to be a \textit{version} of the Lift Function of $(X,Y)$ under $\mathbb{P}$.
	
	The Lift Function may also be defined in terms of the conditional probability function of $(X,Y)$, a $\beta_{\mathbb{R}}$-measurable function that, given a $B \in \beta_{\mathbb{R}}$, is denoted by $P(B \mid \cdot): \mathbb{R} \rightarrow \mathbb{R}_{+}$ and satisfies
	\begin{align}
	\label{conditional}
	\int\displaylimits_{A} P(B \mid x) \ d\mu_{X}(x) = \mu(A \times B) & & \forall A \in \beta_{\mathbb{R}}.
	\end{align}
	Note that $P(B \mid \cdot)$ is unique up to a $\mu_{X}$-null set (see \cite[Section~33]{billingsley2008} for more details) and is also $\mathcal{F}$-measurable. Furthermore, if we fix an $x \in \mathbb{R}$ we may take the $x$-section $L^{x}(y) \coloneqq L(x,y)$ of $L$ as a function of $y$ and then define it in terms of the conditional probability function $P(B \mid \cdot)$ for a $B \in \beta_{\mathbb{R}}$. 
	
	\begin{proposition}
		\label{P1}
		If $\mu \ll \mu_{XY}$, then for all $B \in \beta_{\mathbb{R}}$ 
		\[
		\int\displaylimits_{B} L^{x}(y) \ d\mu_{Y}(y) = P(B \mid x)
		\]
		$\mu_{X}$-almost every $x \in \mathbb{R}$.
	\end{proposition}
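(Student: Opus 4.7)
The plan is to show that both sides of the claimed equality, when integrated against $\mu_X$ over an arbitrary Borel set $A$, produce the same quantity $\mu(A\times B)$; uniqueness of the conditional probability function (up to a $\mu_X$-null set) will then force pointwise equality $\mu_X$-almost everywhere.

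First I would fix $B \in \beta_{\mathbb{R}}$ and let $A \in \beta_{\mathbb{R}}$ be arbitrary. Applying the defining property (\ref{LF}) of the Lift Function to the rectangle $A \times B \in \beta_{\mathbb{R}^2}$ gives
\[
\mu(A \times B) \;=\; \int_{A \times B} L(x,y)\, d\mu_{XY}(x,y).
\]
Since $\mu_{XY}$ is by construction the product measure $\mu_X \times \mu_Y$ and since $L \geq 0$, Tonelli's theorem applies and lets me iterate the integral:
\[
\int_{A \times B} L(x,y)\, d\mu_{XY}(x,y) \;=\; \int_{A} \!\left( \int_{B} L^{x}(y)\, d\mu_Y(y) \right) d\mu_X(x).
\]

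Next I would set $g(x) \coloneqq \int_{B} L^{x}(y)\, d\mu_Y(y)$, which is a well-defined nonnegative $\beta_{\mathbb{R}}$-measurable function by Tonelli. Combining the display above with the defining relation (\ref{conditional}) of the conditional probability function yields
\[
\int_{A} g(x)\, d\mu_X(x) \;=\; \mu(A \times B) \;=\; \int_{A} P(B \mid x)\, d\mu_X(x), \qquad \forall A \in \beta_{\mathbb{R}}.
\]
Since $A$ ranges over all Borel sets, the uniqueness clause for Radon-Nikodym derivatives (equivalently, for the conditional probability function itself) gives $g(x) = P(B \mid x)$ for $\mu_X$-almost every $x$, which is exactly the claim.

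The only subtle point, rather than a genuine obstacle, is making sure Fubini/Tonelli is legitimately invoked: one needs that $L$ is jointly $\beta_{\mathbb{R}^2}$-measurable (which follows from its definition as a Radon-Nikodym derivative) and that $\mu_{XY}$ is a product of $\sigma$-finite measures, which holds since $\mu_X$ and $\mu_Y$ are probability measures. Nonnegativity of $L$ then removes any integrability hypothesis. Hence the argument is essentially a textbook application of Tonelli plus the uniqueness of the conditional probability function.
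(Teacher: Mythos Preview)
Your proof is correct and follows essentially the same route as the paper's: apply Tonelli to the rectangle $A\times B$ using $L\ge 0$ and $\mu_{XY}=\mu_X\times\mu_Y$, compare with the defining relation (\ref{conditional}) for $P(B\mid\cdot)$, and conclude by uniqueness $\mu_X$-almost everywhere. Your explicit justification of the Tonelli hypotheses is a welcome addition but not a departure in method.
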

	\begin{proof}
		Let $\mathcal{B} = \{B_{1} \times B_{2}: B_{1}, B_{2} \in \beta_{\mathbb{R}}\} \subset \beta_{\mathbb{R}^{2}}$ be the set of all Borel rectangles. By Tonelli's Theorem, as $L \geq 0$, it follows that, for all $B_{1} \times B_{2} \in \mathcal{B}$,
		\begin{align*}
		\mu(B_{1} \times B_{2}) = & \int\displaylimits_{B_{1} \times B_{2}} L(x,y) \ d\mu_{XY}(x,y) = \int\displaylimits_{B_{1}} \Bigg[\int\displaylimits_{B_{2}} L^{x}(y) \ d\mu_{Y}(y)\Bigg] d\mu_{X}(x).
		\end{align*}
		Furthermore, by (\ref{conditional}) we have for all $B_{1}, B_{2} \in \beta_{\mathbb{R}}$ that
		\begin{align*}
		\int\displaylimits_{B_{1}} P(B_{2} \mid x) \ d\mu_{X}(x) = \mu(B_{1} \times B_{2})
		\end{align*}
		and it follows that for all $B_{1}, B_{2} \in \beta_{\mathbb{R}}$
		\begin{align*}
		\int\displaylimits_{B_{1}} \Bigg[\int\displaylimits_{B_{2}} L^{x}(y) \ d\mu_{Y}(y)\Bigg] d\mu_{X}(x) = \int\displaylimits_{B_{1}} P(B_{2} \mid x) \ d\mu_{X}(x)
		\end{align*}
		and $\int\displaylimits_{B_{2}} L^{x}(y) \ d\mu_{Y}(y) =  P(B_{2} \mid x)$ $\mu_{X}$-almost every $x \in \mathbb{R}$, for all $B_{2} \in \beta_{\mathbb{R}}$.
	\end{proof}
	
	The relation given in Proposition \ref{P1} provides us with a clue on the form of the Lift Function in some special cases, as when $\mu \ll \mathscr{L}_{2}$ and $\mu_{X}, \mu_{Y} \ll \mathscr{L}_{1}$, i.e., $(X,Y)$ is absolutely continuous w.r.t. Lebesgue measure (or when $\mu,\mu_{X}$ and $\mu_{Y}$ are absolutely continuous w.r.t. the corresponding counting measure). Indeed, the Lift Function will be given by the ratio between the joint probability density (probability function) of $(X,Y)$ and the product of their marginal probability densities (probability functions), when the ratio is defined. This fact may be established by substituting these ratios in (\ref{LF}). Furthermore, this representation shows that the Lift Function provides a local measure of dependence between $X$ and $Y$. Indeed, when $X$ and $Y$ are independent the Lift Function equals one $\mu$-almost surely.
	
	\begin{proposition}
		\label{P2}
		The random variables $X$ and $Y$ are independent if, and only if, $\mu \ll \mu_{XY}$ and $L_{(X,Y,\mathbb{P})} \equiv 1$ $\mu$-almost surely.
	\end{proposition}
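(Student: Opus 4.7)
The plan is to reduce the biconditional to the measure-theoretic identity $\mu = \mu_{XY}$, which is equivalent to independence since by definition $\mu_{XY}(A\times B) = \mu_{X}(A)\mu_{Y}(B)$ on rectangles.

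For the forward direction, assume $X$ and $Y$ are independent. Then $\mu(A\times B) = \mu_{X}(A)\mu_{Y}(B) = \mu_{XY}(A\times B)$ for every Borel rectangle, so $\mu$ and $\mu_{XY}$ agree on the $\pi$-system $\mathcal{B}$ of rectangles, which generates $\beta_{\mathbb{R}^{2}}$. Both are finite ($\sigma$-finite), so by the uniqueness-of-extension theorem $\mu = \mu_{XY}$ on all of $\beta_{\mathbb{R}^{2}}$. In particular $\mu \ll \mu_{XY}$, and the constant function $1$ satisfies (\ref{LF}), so $L_{(X,Y,\mathbb{P})} \equiv 1$ everywhere (hence $\mu$-almost surely).

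For the reverse direction, assume $\mu \ll \mu_{XY}$ and $L_{(X,Y,\mathbb{P})} = 1$ $\mu$-almost surely. First I would upgrade this to $\mu_{XY}$-almost sure equality. Set $E = \{(x,y) \in \mathbb{R}^{2} : L_{(X,Y,\mathbb{P})}(x,y) = 1\}$; then $\mu(E) = 1$, and by (\ref{LF})
\[
1 \;=\; \mu(E) \;=\; \int_{E} L_{(X,Y,\mathbb{P})}(x,y)\,d\mu_{XY}(x,y) \;=\; \int_{E} 1\,d\mu_{XY}(x,y) \;=\; \mu_{XY}(E),
\]
so $L_{(X,Y,\mathbb{P})} = 1$ also $\mu_{XY}$-almost surely. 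Applying (\ref{LF}) again to an arbitrary $A \in \beta_{\mathbb{R}^{2}}$ yields $\mu(A) = \int_{A} L_{(X,Y,\mathbb{P})}\,d\mu_{XY} = \mu_{XY}(A)$, and specialising to $A = A_{1} \times A_{2}$ gives $\mu(A_{1}\times A_{2}) = \mu_{X}(A_{1})\mu_{Y}(A_{2})$, i.e., independence.

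The main subtlety is the asymmetry between $\mu$-almost sure and $\mu_{XY}$-almost sure statements: the hypothesis $\mu \ll \mu_{XY}$ goes in the wrong direction to transfer a $\mu$-null set to a $\mu_{XY}$-null set directly. The trick above — integrating $L_{(X,Y,\mathbb{P})}$ over the set on which it already equals $1$, so that $\mu$ and $\mu_{XY}$ are forced to agree there — is what circumvents this obstacle, and it is the only non-routine step of the argument.
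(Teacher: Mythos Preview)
Your proof is correct and follows the same overall strategy as the paper: both directions reduce to the identity $\mu = \mu_{XY}$, verified on rectangles. The paper handles the $\mu$-a.s.\ versus $\mu_{XY}$-a.s.\ asymmetry only by the one-line observation that $\mu_{XY}$-a.s.\ implies $\mu$-a.s.\ (and then tacitly works with $\mu_{XY}$-a.s.\ in both directions), whereas your explicit upgrade $\mu(E)=\int_{E}L\,d\mu_{XY}=\mu_{XY}(E)$ genuinely closes the gap in the reverse direction that the paper's remark does not address.
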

	\begin{proof}
		Note that if $\mu \ll \mu_{XY}$ and $L \equiv 1$ $\mu_{XY}$-almost surely, then $L \equiv 1$ $\mu$-almost surely for $\{A \in \beta_{\mathbb{R}^{2}}: \mu(A) = 0\} \supset \{A \in \beta_{\mathbb{R}^{2}}: \mu_{XY}(A) = 0\}$, so that it is enough to show that $L \equiv 1$ $\mu_{XY}$-almost surely.
		
		$(\implies)$
		Suppose that $\mu \ll \mu_{XY}$ and $L_{(X,Y,\mathbb{P})} \equiv 1$ $\mu_{XY}$-almost surely. Then, for all $B_{1}, B_{2} \in \beta_{\mathbb{R}}$,
		\begin{align*}
		\mu(B_{1} \times B_{2}) = \int\displaylimits_{B_{1} \times B_{2}} 1 \ d\mu_{XY}(x,y) = \mu_{X}(B_{1})\mu_{Y}(B_{2})
		\end{align*}
		and $X$ and $Y$ are independent by definition.
		
		$(\impliedby)$
		Suppose that $X$ and $Y$ are independent. Then, for all $B_{1}, B_{2} \in \beta_{\mathbb{R}}$,
		\begin{align*}
		\int\displaylimits_{B_{1} \times B_{2}} \! \! \! \! \! L(x,y) \ d\mu_{XY}(x,y) & = \mu(B_{1} \times B_{2})  = \mu_{X}(B_{1})\mu_{Y}(B_{2})  = \int\displaylimits_{B_{1} \times B_{2}} \! \! \! \! \! 1 \ d\mu_{XY}(x,y)
		\end{align*}
		so that $L = 1$ $\mu_{XY}$-almost surely. Now, if $\mu$ is not absolutely continuous w.r.t. $\mu_{XY}$ then $X$ and $Y$ are dependent.
	\end{proof}
	
	Another important property of the Lift Function is that it cannot be greater than one, nor lesser than one, $\mu_{XY}$-almost surely.
	
	\begin{proposition}
		\label{P3}
		If $\mu \ll \mu_{XY}$, then $\mu_{XY}(\{(x,y) \in \mathbb{R}^{2}: L(x,y) > 1\}) < 1$ and $\mu_{XY}(\{(x,y) \in \mathbb{R}^{2}: L(x,y) < 1\}) < 1$.
	\end{proposition}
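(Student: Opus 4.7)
The plan is to reduce the two claims to the single normalization identity $\int_{\mathbb{R}^2} L \, d\mu_{XY} = 1$, which follows by taking $A = \mathbb{R}^2$ in the defining relation (\ref{LF}) together with $\mu(\mathbb{R}^2)=1$. With this in hand, both assertions become a standard positivity-under-the-integral argument, and I would proceed by contradiction for each.

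First I would treat the set $U = \{(x,y) : L(x,y) > 1\}$. Assume for contradiction that $\mu_{XY}(U) = 1$, so that $L - 1 > 0$ $\mu_{XY}$-almost surely. To turn this strict pointwise inequality into a strict inequality on the integral, decompose $U = \bigcup_{n \geq 1} U_n$ where $U_n = \{L - 1 > 1/n\}$. Since $U_n \uparrow U$ and $\mu_{XY}(U) = 1 > 0$, continuity of measure gives $\mu_{XY}(U_n) > 0$ for some $n$, whence
\[
\int_{\mathbb{R}^2}(L-1)\,d\mu_{XY} \;\geq\; \int_{U_n}(L-1)\,d\mu_{XY} \;\geq\; \tfrac{1}{n}\mu_{XY}(U_n) \;>\; 0.
\]
This forces $\int L\,d\mu_{XY} > 1$, contradicting the normalization identity above.

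The argument for $V = \{(x,y) : L(x,y) < 1\}$ is symmetric: assuming $\mu_{XY}(V) = 1$, I would apply the same decomposition trick to $V_n = \{1 - L > 1/n\}$ to conclude $\int(1-L)\,d\mu_{XY} > 0$, i.e., $\int L\,d\mu_{XY} < 1$, again contradicting the identity. (Here $L \geq 0$ ensures $1 - L$ is handled without integrability issues.)

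There is no real obstacle; the only point that needs some care is that having $L > 1$ holding only $\mu_{XY}$-almost everywhere is \emph{not} by itself enough to deduce $\int L\,d\mu_{XY} > 1$ without further structure, so the level-set decomposition step is the one I would make sure to write out explicitly rather than skip.
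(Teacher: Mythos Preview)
Your proof is correct and follows essentially the same route as the paper: argue by contradiction, invoke the normalization $\int L\,d\mu_{XY}=\mu(\mathbb{R}^2)=1$, and derive $1>1$ (respectively $1<1$). The only difference is that you spell out, via the level sets $U_n$, why a strict $\mu_{XY}$-a.e.\ inequality yields a strict inequality of integrals, whereas the paper writes that step as a single chain $1=\int L\,d\mu_{XY}>\int 1\,d\mu_{XY}=1$; your remark that this step ``is not by itself enough'' is a bit overcautious, since $\mu_{XY}$ is a probability measure and the standard fact you prove is exactly what justifies it.
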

	\begin{proof}
		Suppose that $\mu_{XY}(\{(x,y) \in \mathbb{R}^{2}: L(x,y) > 1\}) = 1$. Then
		\begin{align*}
		1 = \mu(\mathbb{R}^{2}) = \int\displaylimits_{\mathbb{R}^{2}} L(x,y) \ d\mu_{XY}(x,y) > \int\displaylimits_{\mathbb{R}^{2}} 1 \ d\mu_{XY}(x,y) = 1.
		\end{align*}
		Analogously, we can show that $\mu_{XY}(\{(x,y) \in \mathbb{R}^{2}: L(x,y) < 1\})$ cannot be equal to one.
	\end{proof}
	
	From Propositions \ref{P2} and \ref{P3} we see that, if $\mu \ll \mu_{XY}$, either $X$ and $Y$ are independent and $L \equiv 1$ $\mu_{XY}$-almost surely, or there are $\mu_{XY}$-non-null sets where $X$ \textit{lifts} $Y$ and $\mu_{XY}$-non-null sets where $X$ \textit{inhibits} $Y$. Therefore, the lift is not a property of the whole \textit{distribution} of $(X,Y)$, but is rather a \textit{pointwise} property of it. Note that this property is not satisfied by Sibuya's function \cite{sibuya}, as it may be greater than one or lesser than one for all points in $\mathbb{R}^{2}$, so that $(X,Y)$ are positively quadrant dependent and negatively quadrant dependent, respectively (see \cite{lehmann1966} for more details).
	
	\begin{remark}
		\normalfont The Lift Function as defined in (\ref{LF}) may be extended to the case in which we have two real-valued random vectors $\boldsymbol{X} = (X_{1},\dots,X_{m_{1}})$ and $\boldsymbol{Y} = (Y_{1},\dots,Y_{m_{2}})$, defined in $(\Omega,\mathcal{F},\mathbb{P})$. Indeed, let $(\mathbb{R}^{m_{1}},\beta_{\mathbb{R}^{m_{1}}}), (\mathbb{R}^{m_{2}},\beta_{\mathbb{R}^{m_{2}}})$ and $(\mathbb{R}^{m_{1}+m_{2}},\beta_{\mathbb{R}^{m_{1}+m_{2}}})$ be measurable spaces, and let $\mu$ be a $(\mathbb{R}^{m_{1}+m_{2}},\beta_{\mathbb{R}^{m_{1}+m_{2}}})$-probability measure such that $\mu(A) = \mathbb{P}((\boldsymbol{X},\boldsymbol{Y}) \in A), \forall A \in \beta_{\mathbb{R}^{m_{1}+m_{2}}}$. Define $\mu_{\boldsymbol{X}}$, $\mu_{\boldsymbol{Y}}$ as $\mu_{\boldsymbol{X}}(A) = \mu(A \times \mathbb{R}^{m_{2}})$ and $\mu_{\boldsymbol{Y}}(B) = \mu(\mathbb{R}^{m_{1}} \times B), \forall A \in \beta_{\mathbb{R}^{m_{1}}}, B \in \beta_{\mathbb{R}^{m_{2}}}$. Finally, define $\mu_{\boldsymbol{XY}}$ as the only $(\mathbb{R}^{m_{1}+m_{2}},\beta_{\mathbb{R}^{m_{1}+m_{2}}})$-probability measure which satisfies $\mu_{\boldsymbol{XY}}(A \times B) = \mu_{\boldsymbol{X}}(A)\mu_{\boldsymbol{Y}}(B)$ for all $A \in \beta_{\mathbb{R}^{m_{1}}}, B \in \beta_{\mathbb{R}^{m_{2}}}$. Then, the Lift Function of $(\boldsymbol{X},\boldsymbol{Y})$ is defined as the Radon-Nikodym derivative of $\mu$ with respect to $\mu_{\boldsymbol{XY}}$. In the multidimensional case, the Lift Function assesses the local dependence between $\boldsymbol{X}$ and $\boldsymbol{Y}$, rather than the dependence between the variables inside each random vector.
	\end{remark}
	
	\section{A General Lift Function}
	\label{Sec3}
	
	The Lift Function is a powerful tool for analysing locally the dependence between two random variables, but is somewhat limited as it is restricted to random variables such that $\mu \ll \mu_{XY}$, which is a strong restraint. As discussed in the introduction, in some simple cases, as when $X$ is absolutely continuous w.r.t. Lebesgue measure, and $Y$ equals a smooth function of $X$ with probability $1$, we have that $\mu \not\ll \mu_{XY}$, and the Lift Function (\ref{LF}) is not well-defined. Nevertheless, we now develop an extension of (\ref{LF}) which holds for a larger class of random variables.
	
	Suppose that $\mu_{X}, \mu_{Y} \ll \mathscr{L}_{1}$ and denote $\delta_{\epsilon}(x,y), \epsilon > 0, (x,y) \in \mathbb{R}^{2}$, as 
	\begin{equation*}
	\delta_{\epsilon}(x,y) = \begin{cases}
	1/\mathscr{L}_{2}\big(B_{\epsilon}(0,0)\big), & \text{ if } (x,y) \in B_{\epsilon}(0,0) \\
	0, & \text{ otherwise }
	\end{cases}
	\end{equation*}
	in which $B_{\epsilon}(x,y)$ is the ball centred at $(x,y) \in \mathbb{R}^{2}$ with radius $\epsilon$. The convolution measure of $\mu$ and $\delta_{\epsilon}$ is given by
	\begin{equation*}
	\mu_{\epsilon}(A) \coloneqq \big(\mu * \delta_{\epsilon}\big)(A) = \int_{\mathbb{R}^{2}} \int_{\mathbb{R}^{2}} \mathds{1}\{(x^*+x',y^*+y') \in A\} \ d\delta_{\epsilon}(x^*,y^*) \ d\mu(x',y')
	\end{equation*}
	for all $A \in \beta_{\mathbb{R}^{2}}$. The measures $\{\mu_{\epsilon}: \epsilon > 0\}$ are absolutely continuous w.r.t. $\mathscr{L}_{2}$ and have the following Radon-Nikodym derivatives.
	
	\begin{lemma}
		\label{mu_epsolon_continuous}
		The convolution measures $\{\mu_{\epsilon}: \epsilon > 0\}$ are such that $\mu_{\epsilon} \ll \mathscr{L}_{2}$ and 
		\[
		\rho_{\epsilon}(x,y) \coloneqq \frac{d\mu_{\epsilon}}{d\mathscr{L}_{2}}(x,y) = \frac{\mu(B_{\epsilon}(x,y))}{\mathscr{L}_{2}(B_{\epsilon}(0,0))}
		\]
		is a version of the respective Radon-Nikodym derivative, for $(x,y) \in \mathbb{R}^{2}$.
	\end{lemma}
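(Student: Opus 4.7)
The plan is to evaluate the convolution integral defining $\mu_\epsilon(A)$ and rewrite it as a Lebesgue integral of $\rho_\epsilon$ over $A$; this simultaneously establishes $\mu_\epsilon \ll \mathscr{L}_2$ and identifies the density.

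First, I would unpack the definition of $\delta_\epsilon$: since $d\delta_\epsilon(x^*,y^*) = \frac{1}{\mathscr{L}_2(B_\epsilon(0,0))}\mathds{1}\{(x^*,y^*) \in B_\epsilon(0,0)\}\, d\mathscr{L}_2(x^*,y^*)$, the convolution becomes
\[
\mu_\epsilon(A) = \frac{1}{\mathscr{L}_2(B_\epsilon(0,0))}\int_{\mathbb{R}^2}\int_{B_\epsilon(0,0)} \mathds{1}\{(x^*+x',y^*+y') \in A\}\, d\mathscr{L}_2(x^*,y^*)\, d\mu(x',y').
\]
Next I would apply a translation in the inner integral, using translation invariance of $\mathscr{L}_2$: substituting $(u,v) = (x^*+x',y^*+y')$ converts the inner integral into $\mathscr{L}_2\big(A \cap B_\epsilon(x',y')\big)$, since the condition $(x^*,y^*) \in B_\epsilon(0,0)$ becomes $(u,v) \in B_\epsilon(x',y')$.

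The key step is then to apply Tonelli's theorem (permissible because the integrand is non-negative): writing $\mathscr{L}_2(A \cap B_\epsilon(x',y')) = \int_A \mathds{1}\{(x',y') \in B_\epsilon(x,y)\}\, d\mathscr{L}_2(x,y)$ using the symmetry $(u,v) \in B_\epsilon(x',y') \Leftrightarrow (x',y') \in B_\epsilon(u,v)$, and swapping the order of integration, gives
\[
\mu_\epsilon(A) = \int_A \frac{1}{\mathscr{L}_2(B_\epsilon(0,0))}\int_{\mathbb{R}^2}\mathds{1}\{(x',y') \in B_\epsilon(x,y)\}\, d\mu(x',y')\, d\mathscr{L}_2(x,y) = \int_A \rho_\epsilon(x,y)\, d\mathscr{L}_2(x,y)
\]
for every $A \in \beta_{\mathbb{R}^2}$. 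This identity yields $\mu_\epsilon \ll \mathscr{L}_2$ and identifies $\rho_\epsilon$ as a version of the Radon-Nikodym derivative.

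There is no real obstacle here: measurability of $(x,y) \mapsto \mu(B_\epsilon(x,y))$ follows from a routine monotone class or Tonelli argument (the map $(x,y,x',y') \mapsto \mathds{1}\{(x',y') \in B_\epsilon(x,y)\}$ is jointly Borel), and all Fubini/Tonelli applications are valid since the integrands are non-negative. The only mild subtlety is making the translation step explicit so the reader sees that the symmetry of the Euclidean ball under the exchange of centre and argument is what converts the convolution into the stated pointwise formula.
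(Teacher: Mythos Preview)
Your proof is correct and follows essentially the same route as the paper: both arguments verify the identity $\mu_\epsilon(A)=\int_A \rho_\epsilon\, d\mathscr{L}_2$ via Tonelli together with translation invariance of $\mathscr{L}_2$ and the symmetry $(u,v)\in B_\epsilon(x',y')\Leftrightarrow (x',y')\in B_\epsilon(u,v)$. The only cosmetic difference is order of presentation---the paper first computes the pointwise convolution density and then checks it integrates correctly, whereas you start from $\mu_\epsilon(A)$ and unwind to the Lebesgue integral; your version is arguably cleaner since it avoids the formal use of the point-indicator $\mathds{1}\{(x'+x^*,y'+y^*)=(x,y)\}$ inside a Lebesgue integral.
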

	\begin{proof}
		The convolution of $\delta_{\epsilon}$ and $\mu$ at a point $(x,y) \in \mathbb{R}^{2}$ is given by
		\begin{align*}
		\big(\mu * \delta_{\epsilon}\big)(x,y) & = \int_{\mathbb{R}^{2}} \int_{\mathbb{R}^{2}} \mathds{1}\{(x'+x^*,y'+y^*) = (x,y)\} \ d\delta_{\epsilon}(x^*,y^*) \ d\mu(x',y') \\
		& = \int_{\mathbb{R}^{2}} \delta_{\epsilon}(x-x',y-y') \ d\mu(x',y') \\
		& = \int_{\mathbb{R}^{2}} \frac{\mathds{1}\{(x',y') \in B_{\epsilon}(x,y)\}}{\mathscr{L}_{2}(B_{\epsilon}(0,0))} \ d\mu(x',y') \\
		& = \frac{\mu(B_{\epsilon}(x,y))}{\mathscr{L}_{2}(B_{\epsilon}(0,0))}.
		\end{align*}
		Therefore, it follows that, for all $A \in \beta_{\mathbb{R}^{2}}$,
		\begin{align*}
		\int_{A} & \frac{\mu(B_{\epsilon}(x,y))}{\mathscr{L}_{2}(B_{\epsilon}(0,0))} \ d\mathscr{L}_{2}(x,y) = \\
		& = \int_{A} \int_{\mathbb{R}^{2}} \int_{\mathbb{R}^{2}} \mathds{1}\{(x'+x^*,y'+y^*) = (x,y)\} \ d\delta_{\epsilon}(x^*,y^*) \ d\mu(x',y') \ d\mathscr{L}_{2}(x,y) \\
		& = \int_{\mathbb{R}^{2}} \int_{\mathbb{R}^{2}} \int_{A} \mathds{1}\{(x'+x^*,y'+y^*) = (x,y)\} \ d\mathscr{L}_{2}(x,y) \ d\delta_{\epsilon}(x^*,y^*) \ d\mu(x',y') \\
		& = \int_{\mathbb{R}^{2}} \int_{\mathbb{R}^{2}} \mathds{1}\{(x'+x^*,y'+y^*) \in A\} \ d\delta_{\epsilon}(x^*,y^*) \ d\mu(x',y') = \mu_{\epsilon}(A).
		\end{align*}
	\end{proof}
	
	In order to establish a Lift Function for a more general class of random variables, we need to define a function for $\mu$ that will behave as a \textit{probability density} of it w.r.t. Lebesgue measure, that holds even when $\mu \not\ll \mathscr{L}_{2}$, which is a consequence of $\mu \not\ll \mu_{XY}$ when $\mu_{X}, \mu_{Y} \ll \mathscr{L}_{1}$. Observe that when $\mu \ll \mathscr{L}_{2}$
	\begin{equation}
	\label{rho_epsilon_continuous}
	\rho(x,y) \coloneqq \frac{d\mu}{d\mathscr{L}_{2}}(x,y) = \lim\limits_{\epsilon \rightarrow 0} \frac{d\mu_{\epsilon}}{d\mathscr{L}_{2}}(x,y) = \lim\limits_{\epsilon \rightarrow 0} \frac{\mu(B_{\epsilon}(x,y))}{\mathscr{L}_{2}(B_{\epsilon}(0,0))}
	\end{equation}
	for almost all $(x,y)$, and when $\mu \not\ll \mathscr{L}_{2}$ this limit may diverge or not exist. However, if we multiply the Radon-Nikodym derivative inside the limit by a power of $\epsilon$, we may obtain a finite limit which will work as a \textit{density} for $\mu$ w.r.t. Lebesgue measure. Let
	\begin{equation*}
	l(x,y) = \Big\{ s \in (-\infty,2]: \lim\limits_{\epsilon \rightarrow 0} \epsilon^{2-s} \rho_{\epsilon}(x,y) = 0\Big\}
	\end{equation*}
	and define $s(x,y) = \sup l(x,y)$. If $l(x,y)$ is not empty, then it is an open or closed half-line whose endpoint is $s(x,y)$.
	
	\begin{lemma}
		\label{properties_lxy}
		The set $l(x,y)$ is a half-line that contains $(-\infty,0)$. In particular, if $\mu \ll \mathscr{L}_{2}$ then $(-\infty,2) \subset l(x,y)$.
	\end{lemma}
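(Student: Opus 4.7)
The plan is to substitute the explicit formula for $\rho_\epsilon$ provided by Lemma \ref{mu_epsolon_continuous}, namely $\rho_\epsilon(x,y) = \mu(B_\epsilon(x,y))/(\pi\epsilon^2)$, so that
\begin{equation*}
\epsilon^{2-s}\rho_\epsilon(x,y) = \frac{\mu(B_\epsilon(x,y))}{\pi\epsilon^s},
\end{equation*}
and then argue by direct limit estimates for each part of the statement.

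For the half-line property I would show that $l(x,y)$ is downward closed. Suppose $s \in l(x,y)$ and $s' < s$. Writing $\epsilon^{2-s'}\rho_\epsilon(x,y) = \epsilon^{s-s'}\cdot \epsilon^{2-s}\rho_\epsilon(x,y)$, the first factor tends to $0$ because $s - s' > 0$ and the second tends to $0$ by hypothesis, so the product tends to $0$ and $s' \in l(x,y)$. Combined with the definitional cap $l(x,y) \subset (-\infty, 2]$, this shows that $l(x,y)$ is a half-line.

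For the containment $(-\infty, 0) \subset l(x,y)$, the trivial bound $\mu(B_\epsilon(x,y)) \leq 1$ yields $\epsilon^{2-s}\rho_\epsilon(x,y) \leq 1/(\pi\epsilon^s) \to 0$ for every $s < 0$, at every $(x,y) \in \mathbb{R}^2$ and regardless of whether $(x,y)$ is a $\mu$-atom; the exclusion of the endpoint $s = 0$ is exactly what lets the argument bypass that distinction. For the case $\mu \ll \mathscr{L}_2$ I would appeal to the Lebesgue differentiation theorem, already recorded in equation (\ref{rho_epsilon_continuous}), which gives $\rho_\epsilon(x,y) \to \rho(x,y) \coloneqq d\mu/d\mathscr{L}_2(x,y) < \infty$ for $\mathscr{L}_2$-a.e.\ $(x,y)$; then for any $s < 2$, $\epsilon^{2-s} \to 0$ forces $\epsilon^{2-s}\rho_\epsilon(x,y) \to 0 \cdot \rho(x,y) = 0$. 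The arguments are essentially routine limit manipulations, so no serious obstacle is expected; the one point to keep in mind is that this last containment is inherently an a.e.\ statement inherited from Lebesgue differentiation, a qualifier that is implicit in the lemma.
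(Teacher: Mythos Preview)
Your proposal is correct and follows essentially the same approach as the paper: downward closure of $l(x,y)$ via a comparison of powers of $\epsilon$, the bound $\mu(B_\epsilon(x,y))\leq 1$ for the containment $(-\infty,0)\subset l(x,y)$, and the Lebesgue differentiation identity (\ref{rho_epsilon_continuous}) for the absolutely continuous case. The only cosmetic difference is that for the half-line step the paper uses the inequality $\epsilon^{2-s'}\rho_\epsilon\leq \epsilon^{2-s}\rho_\epsilon$ for $0<\epsilon<1$ rather than your factorization $\epsilon^{2-s'}\rho_\epsilon=\epsilon^{s-s'}\cdot \epsilon^{2-s}\rho_\epsilon$; both are routine and equivalent.
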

	\begin{proof}
		If $s \in l(x,y)$ and $s' < s$ then $s' \in l(x,y)$ as, for $0 < \epsilon < 1$,
		\begin{align*}
		0 \leq \epsilon^{2-s'} \rho_{\epsilon}(x,y) \leq &\epsilon^{2-s} \rho_{\epsilon}(x,y) \\
		& \implies  0 \leq \lim\limits_{\epsilon \rightarrow 0} \epsilon^{2-s'} \rho_{\epsilon}(x,y) \leq \lim\limits_{\epsilon \rightarrow 0} \epsilon^{2-s} \rho_{\epsilon}(x,y) = 0
		\end{align*}
		so $l(x,y)$ is a half-line of the form $(-\infty,l(x,y))$ or $(-\infty,l(x,y)]$. Furthermore,	
		\begin{align*}
		\epsilon^{2-s} \rho_{\epsilon}(x,y) \leq \frac{\epsilon^{2-s}}{\mathscr{L}_{2}(B_{\epsilon}(0,0))} = \frac{\epsilon^{2-s}}{\pi \epsilon^{2}}
		\end{align*}
		so that, if $s < 0$, then
		\begin{align*}
		\lim\limits_{\epsilon \rightarrow 0} \epsilon^{2-s} \rho_{\epsilon}(x,y) \leq \lim\limits_{\epsilon \rightarrow 0} \epsilon^{-s} = 0
		\end{align*}
		and $(-\infty,0) \subset l(x,y)$. Finally, if $\mu \ll \mathscr{L}_{2}$ then $s(x,y) = 2$ by (\ref{rho_epsilon_continuous}), and $l(x,y) = (-\infty,2]$ or $l(x,y) = (-\infty,2)$, whether $\rho(x,y) = 0$ or not. 
	\end{proof}
	
	A \textit{(false) density} of $\mu$ w.r.t. $\mathscr{L}_{2}$ may then be defined as
	\begin{equation}
	\label{limit_rho}
	\tilde{\rho}(x,y) = \lim\limits_{\epsilon \rightarrow 0} \epsilon^{2-s(x,y)} \rho_{\epsilon}(x,y)
	\end{equation}
	for $(x,y) \in \mathbb{R}^{2}$ such that this limit exists. Note that, if $\mu \ll \mathscr{L}_{2}$, then $\tilde{\rho} \equiv \rho$. Proceeding this way, denoting $\rho_{X} = \frac{d\mu_{X}}{d\mathscr{L}_{1}}$ and $\rho_{Y} = \frac{d\mu_{Y}}{d\mathscr{L}_{1}}$, which are well-defined as $\mu_{X}, \mu_{Y} \ll \mathscr{L}_{1}$, we define the Lift Function as
	\begin{equation}
	\label{LF_general}
	L_{(X,Y,\mathbb{P})}(x,y) = \frac{\tilde{\rho}(x,y)}{\rho_{X}(x) \rho_{Y}(y)}
	\end{equation}
	when $\tilde{\rho}(x,y)$ is defined. The Lift Function (\ref{LF_general}) reduces to (\ref{LF}) when $\mu \ll \mu_{XY}$. Under definition (\ref{LF_general}), the Lift Function is well-defined for all random variables such that $\mu \ll \mu_{XY}$ or such that $\mu_{X}, \mu_{Y} \ll \mathscr{L}_{1}$ and the limit (\ref{limit_rho}) exists for all $(x,y) \in \mathbb{R}^{2}$, which covers a large class of singular joint distributions with absolutely continuous, w.r.t. Lebesgue measure, marginal distributions.
	
	The following example derives the Lift Function for the case in which $\mu_{X}$ is absolutely continuous w.r.t. $\mathscr{L}_{1}$, and $Y = \varphi(X)$ with probability 1, in which $\varphi$ is a smooth real-valued function. The interpretation of the Lift Function in this example is an illustration of how it may be useful for studying variable dependence.
	
	\begin{example}
		\normalfont
		By definition (\ref{LF_general}) we have that, if $\mu_{X}, \mu_{Y} \ll \mathscr{L}_{1}$ and $\mathbb{P}(Y = \varphi(X)) = 1$, i.e., $\mu\big(\gamma\big) = 1, \gamma \coloneqq \{(x,y) \in \mathbb{R}^{2}: y = \varphi(x)\}$, for $\varphi \in C^{1}(\mathbb{R})$, then 
		\begin{align*}
		\tilde{\rho}(x,\varphi(x)) & = \lim\limits_{\epsilon \rightarrow 0} \epsilon^{2-s} \frac{\int_{Proj_{X}(\gamma \cap B_{\epsilon}(x,\varphi(x)))} \rho_{X}(x') \ dx'}{\pi \epsilon^{2}} \\
		& = \lim\limits_{\epsilon \rightarrow 0} \frac{\epsilon^{-s}}{\pi} \int_{a_{\epsilon}}^{b_{\epsilon}} \frac{\rho_{X}(t)}{\lVert \gamma'(t) \rVert} \ dt = \lim\limits_{\epsilon \rightarrow 0} \frac{\epsilon^{-s}}{\pi} \frac{\rho_{X}(\tilde{x})}{\lVert \gamma'(\tilde{x}) \rVert} (b_{\epsilon} - a_{\epsilon}) \\
		& = \lim\limits_{\epsilon \rightarrow 0} \frac{2\epsilon^{-s+1}\rho_{X}(\tilde{x})}{\pi\sqrt{1 + [\varphi'(\tilde{x})]^{2}}} = \frac{2\rho_{X}(x)}{\pi\sqrt{1 + [\varphi'(x)]^{2}}}
		\end{align*}
		if $s \coloneqq s(x,y) = 1$, in which the curve $\gamma$ is parametrized as $\gamma(t) = (t,\varphi(t)), t \in \mathbb{R}$; $(a_{\epsilon},b_{\epsilon})$ is such that $\{(t,\gamma(t)): a_{\epsilon} \leq t \leq b_{\epsilon}\} \subset \gamma \cap B_{\epsilon}(x,\varphi(x))$ and $b_{\epsilon} - a_{\epsilon} = o(2\epsilon)$; and $\tilde{x} \in Proj_{X}(\gamma \cap B_{\epsilon}(x,\varphi(x))) \xrightarrow{\epsilon \rightarrow 0} x$. Therefore, $L(x,y) = 0$ if $y \neq \varphi(x)$ and
		\begin{equation}
		\label{LF_functional}
		L(x,\varphi(x)) = \frac{2}{\pi \rho_{Y}(\varphi(x)) \sqrt{1 + [\varphi'(x)]^{2}}}
		\end{equation}
		
		The form of the Lift Function (\ref{LF_functional}) has nice properties, which one would expect a generalization of (\ref{LF}) to have. On the one hand, as in the case in which $X$ and $Y$ are random variables absolutely continuous w.r.t. counting measure and $\mathbb{P}(Y = \varphi(X)) = 1$, the density of $Y$ is in the denominator of the Lift Function: in that case we had the density of $Y$ w.r.t. the counting measure, while in (\ref{LF_functional}) the density of $Y$ w.r.t. $\mathscr{L}_{1}$. 
		
		On the other hand, the Lift Function (\ref{LF_functional}) depends on the derivative of $\varphi$, which would be expected for a local dependence quantifier. Indeed, suppose that $\mu_{X}([a,b]) = 1$, for some $a,b \in \mathbb{R}$, and that $\mathbb{P}(Y_{1} = \varphi_{1}(X)) = \mathbb{P}(Y_{2} = \varphi_{2}(X)) = 1, \varphi_{1},\varphi_{2} \in C^{1}([a,b])$. If $\varphi'_{1}(x) > \varphi'_{2}(x)$ for almost every $x$, then, in the first case, the probability mass is spread over a curve of length $\int_{a}^{b} \sqrt{1 + [\varphi'_{1}(t)]^{2}} \ dt > \int_{a}^{b} \sqrt{1 + [\varphi'_{2}(t)]^{2}} \ dt$, the length of the curve in which the probability mass is spread over in the second case.
		
		Therefore, the \textit{local dependence} between $X$ and $Y_{2}$ is \textit{greater} than between $X$ and $Y_{1}$, as the probability mass is \textit{most concentrated} in the joint distribution of $(X,Y_{2})$. This fact is expressed by (\ref{LF_functional}) as $L_{(X,Y_{1})} \leq L_{(X,Y_{2})}$ for almost every $x$, showing that the Lift Function portrays in detail the dependence between two random variables, and is capable of drawing a distinction between two joint distributions by representing the nuances of their (local) dependence.
		
		The Lift Function for this case may also be obtained by another method. Let $\nu_{1}$ be a $(\mathbb{R}^{2},\beta_{\mathbb{R}^{2}})$-measure such that
		\begin{equation*}
		\nu_{1}(A) = \int_{\gamma \cap A} \rho_{X}(x) \rho_{Y}(y) \ d\sigma(x,y)
		\end{equation*}
		for all $A \in \beta_{\mathbb{R}^{2}}$, in which the integral is the respective line integral where $d\sigma$ is the induced volume form on the surface $\gamma \cap A$. Then
		\begin{equation}
		\label{LF_line_integral}
		L(x,y) = \frac{2}{\pi} \frac{d\mu}{d\nu_{1}}(x,y)
		\end{equation}
		for all $(x,y) \in \mathbb{R}^{2}$ is a version of $L$. Indeed, $\mu \ll \nu_{1}$, $\nu_{1}$ is $\sigma$-finite and
		\begin{align*}
		\int_{A} \frac{1}{\rho_{Y}(\varphi(x)) \sqrt{1+ [\varphi'(x)]^{2}}} \ d\nu & = \! \! \! \! \! \! \! \! \! \! \int\limits_{Proj_{X}(\gamma \cap A)} \! \! \! \! \frac{\rho_{X}(x) \rho_{Y}(\varphi(x))}{\rho_{Y}(\varphi(x)) \sqrt{1+ [\varphi'(x)]^{2}}} \lVert \gamma'(x) \rVert \ dx \\
		& = \mu_{X}(Proj_{X}(\gamma \cap A)) = \mu(\gamma \cap A)
		\end{align*}
		for all $A \in \beta_{\mathbb{R}^{2}}$. Identity (\ref{LF_line_integral}) is evidence of a more general relation between the Lift Function and the Hausdorff measures in $\mathbb{R}^{2}$, which is explored in the next section.	
		\hfill$\square$
	\end{example}
	
	\begin{remark}
		\normalfont If $\mu_{X}, \mu_{Y} \ll \eta_{1}$ and $\mu_{XY} \ll \eta_{2}$, in which $\eta_{1}$ and $\eta_{2}$ are $\sigma$-finite measures in $(\mathbb{R},\beta_{\mathbb{R}})$ and $(\mathbb{R}^{2},\beta_{\mathbb{R}^{2}})$, respectively, then it may be possible to define the Lift Function analogously to (\ref{LF_general}), interchanging $\mathscr{L}_{1}, \mathscr{L}_{2}$ by $\eta_{1}, \eta_{2}$ and defining suitable $\delta_{\epsilon}$ measures. 
	\end{remark}
	
	\begin{remark}
		\normalfont
		Although the Lift Function (\ref{LF_general}) is well-defined for a large class of random variables, it may not be \textit{informative}, in the sense of expressing the local dependence between $X$ and $Y$. In some cases, it may be needed to adapt the definition of the Lift Function by, for example, defining a \textit{(false) density} for $\mu$ from another convolution, involving measures that approximate a measure other than the Dirac delta.
	\end{remark}
	
	\subsection{Local Lift Dependence and Hausdorff Measure}
	
	The $m$ Hausdorff density of a measure $\mu$ in $(\mathbb{R}^{2},\beta_{\mathbb{R}^{2}})$ is defined as
	\begin{equation}
	\label{hausdorff_density}
	D_{m}(x,y) \coloneqq \lim\limits_{\epsilon \rightarrow 0} \frac{\mu(B_{\epsilon}(x,y))}{\epsilon^{m}}
	\end{equation}
	when the limit exists (see \cite[Chapter~2]{falconer1986} and \cite[Chapter~5]{preiss1987} for more details). Therefore, it follows that
	\begin{equation*}
	L(x,y) = \frac{1}{\pi}D_{s(x,y)}(x,y)
	\end{equation*}
	when both are well-defined. Denoting $A_{s}, s \in \{1,2\}$, as a measurable set such that $s(x,y) = s$ for all $(x,y) \in A_{s}$, we have that if limit (\ref{hausdorff_density}) exists and is positive for all $(x,y) \in A_{s}$, then $\mu$ restricted to $A_{s}$ is absolutely continuous w.r.t. $\mathscr{H}^{s}|_{A_{s}}$, the $s$-Hausdorff measure in $\mathbb{R}^{2}$ restricted to $A_{s}$.
	
	\begin{proposition}
		\label{P_haurdof_ac}
		Let $A_{s} \in \beta_{\mathbb{R}^{2}}, s \in \{1,2\}$, be such that $s(x,y) = s$ for all $(x,y) \in A_{s}$ and limit (\ref{hausdorff_density}) is positive and finite $\mu$-almost every $(x,y) \in A_{s}$. Then $\mu|_{A_{s}} \ll \mathscr{H}^{s}|_{A_{s}}$.
	\end{proposition}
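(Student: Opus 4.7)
The plan is to reduce absolute continuity to a standard density-comparison argument from geometric measure theory: show that every $B \in \beta_{\mathbb{R}^{2}}$ with $B \subset A_{s}$ and $\mathscr{H}^{s}(B) = 0$ satisfies $\mu(B) = 0$. The key observation is that on $\mu$-almost all of $A_{s}$ the hypothesis furnishes an $\epsilon$-uniform upper bound on $\mu(B_{\epsilon}(x,y))/\epsilon^{s}$; only the finiteness of $D_{s}$ is needed for this implication, and the positivity assumption is not used here.

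First I would strip away a $\mu$-null exceptional set. Let $N \subset A_{s}$ be the Borel set on which either the limit defining $D_{s}(x,y)$ fails to exist or the value lies outside $(0,\infty)$; by hypothesis $\mu(N) = 0$. For each $k \in \mathbb{N}$ put
\[
A_{s,k} = \{(x,y) \in A_{s} \setminus N : D_{s}(x,y) \leq k\},
\]
so that $A_{s} \setminus N = \bigcup_{k \geq 1} A_{s,k}$. It then suffices to prove $\mu(B \cap A_{s,k}) = 0$ for every $k$ and every Borel $B$ with $\mathscr{H}^{s}(B) = 0$.

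Next I would carry out the comparison on each $A_{s,k}$. Fix $k$, set $E = B \cap A_{s,k}$, and let $\eta > 0$. For each $(x,y) \in E$ there is an $\epsilon_{0}(x,y) > 0$ such that $\mu(B_{\epsilon}(x,y)) \leq (k+\eta)\epsilon^{s}$ whenever $0 < \epsilon \leq \epsilon_{0}(x,y)$. Given $\delta > 0$, using $\mathscr{H}^{s}(E) = 0$ and the definition of Hausdorff measure (equivalently the spherical version, comparable up to a dimensional constant), I would extract a cover of $E$ by balls $B_{r_{j}}(x_{j},y_{j})$ with centres in $E$, radii $r_{j} < \min(\delta,\epsilon_{0}(x_{j},y_{j}))$, and $\sum_{j} r_{j}^{s}$ arbitrarily small. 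Then
\[
\mu(E) \leq \sum_{j} \mu(B_{r_{j}}(x_{j},y_{j})) \leq (k+\eta) \sum_{j} r_{j}^{s},
\]
forcing $\mu(E) = 0$. Summing over $k$ gives $\mu(B \cap (A_{s} \setminus N)) = 0$, and adding $\mu(N) = 0$ yields $\mu(B) = 0$, which is $\mu|_{A_{s}} \ll \mathscr{H}^{s}|_{A_{s}}$.

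The delicate step is producing covers whose balls are simultaneously centred in $E$, have radii below each pointwise threshold $\epsilon_{0}(x_{j},y_{j})$, and still approximate $\mathscr{H}^{s}(E)$ from above. This is routinely handled either by a direct Vitali-type construction or by invoking the standard upper-density comparison theorem (for instance Theorem 6.9 in Mattila's \emph{Geometry of Sets and Measures in Euclidean Spaces}). The ambient dimension $2$ enters only through geometric constants, so the same scheme works verbatim for both cases $s = 1$ and $s = 2$.
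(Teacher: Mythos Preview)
Your argument is correct, but it follows a genuinely different route from the paper. The paper's proof is a one-line citation of Preiss's theorem \cite{preiss1987}: if $s$ is an integer and the $s$-density $\lim_{\epsilon\to 0}\mu(B_\epsilon(x,y))/\epsilon^{s}$ exists in $(0,\infty)$ for $\mu$-almost every point of $A_s$, then $\mu|_{A_s}\ll\mathscr{H}^{s}|_{A_s}$ and, moreover, $\mu|_{A_s}$ is $s$-rectifiable. Your proof instead runs the elementary upper-density comparison (stratify by $\{D_s\le k\}$, then cover a putative $\mathscr{H}^{s}$-null set by small balls centred in $E$ and bound $\mu$ by a multiple of $\sum r_j^{s}$), which, as you note, uses only the finiteness of the upper density and not the positivity or the existence of the limit. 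The trade-off is that your argument is far more self-contained and avoids invoking a hard theorem for a soft conclusion, whereas the paper's appeal to Preiss simultaneously delivers the rectifiability of $\mu|_{A_1}$---the countable $C^{1}$ cover $\{\gamma_n\}$---which is exactly what is exploited in the subsequent proof of Theorem~\ref{theorem1_hausdorff}. So your route is cleaner for Proposition~\ref{P_haurdof_ac} in isolation, but the paper's route is doing double duty.
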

	\begin{proof}
		By Preiss's theorem \cite{preiss1987}, if $0 \leq s \leq 2$ is integer and
		\begin{equation*}
		0 < \lim\limits_{\epsilon \rightarrow 0} \frac{\mu(B_{\epsilon}(x,y))}{\epsilon^{s}} < \infty
		\end{equation*}
		$\mu$-almost every $(x,y) \in A_{s} \in \beta_{\mathbb{R}^{2}}$, then $\mu|_{A_{s}} \ll \mathscr{H}^{s}|_{A_{s}}$ and $\mu$-almost every $A_{s}$ can be covered by countably many $s$-dimensional submanifolds of class one of $\mathbb{R}^{2}$, i.e., $\mu|_{A_{s}}$ is $s$-rectifiable (see \cite[5.1]{preiss1987} for more details).
	\end{proof}
	
	\begin{remark}
		\normalfont
		Proposition \ref{P_haurdof_ac} also holds for $s = 0$. However, there is no $A_{0} \in \beta_{\mathbb{R}^{2}}$ satisfying the conditions of the proposition as
		\begin{align*}
		0 < \lim\limits_{\epsilon \rightarrow 0} \frac{\mu(B_{\epsilon}(x,y))}{\pi} = \frac{\mu(x,y)}{\pi}
		\end{align*}
		implies $\min\{\mu_{X}(x),\mu_{Y}(y)\} \geq \mu(x,y) > 0$, which cannot be as $\mu_{X}$ and $\mu_{Y}$ are absolutely continuous w.r.t. $\mathscr{L}_{1}$.
	\end{remark}
	
	\begin{remark}
		\label{remark_sxy}
		\normalfont
		If we extend the definition of $A_{s}$ for $s \in [0,2]$ we note that there is no $A_{s}, s \in [0,1),$ such that $\mu(A_{s}) > 0$. Indeed, if $s \in [0,1)$, then $\mathscr{H}^{1}(A_{s}) = 0$. Furthermore, we have that 
		\begin{equation*}
		\mathscr{L}_{1}(Proj_{X}(A_{s})) \leq \mathscr{H}^{1}(A_{s}) = 0
		\end{equation*}
		so that $\mu_{X}(Proj_{X}(A_{s})) = 0$, as $\mu_{X} \ll \mathscr{L}_{1}$ (see \cite[Lemma~6.1]{falconer1986} for more details). But this implies that $\mu(A_{s}) \leq \mu_{X}(Proj_{X}(A_{s})) = 0$. This fact yields $1 \leq s(x,y) \leq 2$ $\mu$-almost every $(x,y) \in \mathbb{R}^{2}$.
	\end{remark}
	
	If the hypothesis of Proposition \ref{P_haurdof_ac} hold and $\mathscr{H}^{s}|_{A_{s}}$ is $\sigma$-finite, then the Lift Function is proportional to the Radon-Nikodym derivative between $\mu$ and the measure generated by the integral of $\int \rho_{X}\rho_{Y} \ d\mathscr{H}^{s}|_{A_{s}}$. A special case of this fact is (\ref{LF_line_integral}).
	
	\begin{theorem}
		\label{theorem1_hausdorff}
		Let $A_{s} \in \beta_{\mathbb{R}^{2}}, s \in \{1,2\}$, be such that $s(x,y) = s$ for all $(x,y) \in A_{s}$ and limit (\ref{hausdorff_density}) is positive and finite $\mu$-almost every $(x,y) \in A_{s}$. If $\mathscr{H}^{s}|_{A_{s}}$ is $\sigma$-finite, then for all $(x,y) \in A_{s}$ and a constant $\alpha(s)$,
		\begin{equation*}
		L(x,y) = \alpha(s) \frac{d\mu|_{A_{s}}}{d\nu_{s}}(x,y)
		\end{equation*}
		in which $\nu_{s}$ is $(A_{s},\beta_{\mathbb{R}^{2}}|_{A_{s}})$-measurable and
		\begin{equation}
		\label{nu}
		\nu_{s}(B) = \int_{B} \rho_{X}(x)\rho_{Y}(y) \ d\mathscr{H}^{s}|_{A_{s}}(x,y)
		\end{equation}
		for $B \in \beta_{\mathbb{R}^{2}}|_{A_{s}}$.
	\end{theorem}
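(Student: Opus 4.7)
The plan is to reduce the claim to the identity $L(x,y) = D_{s(x,y)}(x,y)/\pi$ (established immediately before the theorem) and then to re-express $D_{s}$ as a Radon-Nikodym derivative against $\nu_{s}$. First, since $\rho_{\epsilon}(x,y) = \mu(B_{\epsilon}(x,y))/(\pi\epsilon^{2})$, definition (\ref{LF_general}) gives
\begin{equation*}
L(x,y) = \frac{\tilde{\rho}(x,y)}{\rho_{X}(x)\rho_{Y}(y)} = \frac{D_{s}(x,y)}{\pi\,\rho_{X}(x)\rho_{Y}(y)}
\end{equation*}
at every point of $A_{s}$ where $\rho_{X}\rho_{Y} > 0$; since $\rho_{X} > 0$ $\mu_{X}$-a.e.\ and $\rho_{Y} > 0$ $\mu_{Y}$-a.e., this holds $\mu$-almost everywhere on $A_{s}$, so the identity above will be the right-hand object to match.

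Second, Proposition~\ref{P_haurdof_ac} yields $\mu|_{A_{s}} \ll \mathscr{H}^{s}|_{A_{s}}$. The critical extra ingredient is to identify the associated Radon-Nikodym derivative with the normalized Hausdorff density,
\begin{equation*}
\frac{d\mu|_{A_{s}}}{d\mathscr{H}^{s}|_{A_{s}}}(x,y) = \frac{D_{s}(x,y)}{\omega_{s}} \qquad \mathscr{H}^{s}\text{-a.e.\ on } A_{s},
\end{equation*}
where $\omega_{s}$ is the $s$-volume of the Euclidean unit ball ($\omega_{1}=2$, $\omega_{2}=\pi$). This is obtained by combining Besicovitch's differentiation theorem (which expresses $d\mu|_{A_s}/d\mathscr{H}^{s}|_{A_s}$ as $\lim_{\epsilon\to 0}\mu(B_{\epsilon})/\mathscr{H}^{s}(B_{\epsilon}\cap A_s)$ $\mathscr{H}^{s}$-a.e.) with the density property $\lim_{\epsilon\to 0}\mathscr{H}^{s}(B_{\epsilon}(x,y)\cap A_{s})/\epsilon^{s} = \omega_{s}$ for $\mathscr{H}^{s}$-a.e.\ $(x,y) \in A_s$, which is part of the content of Preiss's rectifiability theorem already invoked in Proposition~\ref{P_haurdof_ac}.

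Third, the $\sigma$-finiteness of $\mathscr{H}^{s}|_{A_{s}}$ together with the $\mu$-a.e.\ positivity of $\rho_{X}\rho_{Y}$ implies that $\nu_{s}$ is $\sigma$-finite on $A_{s}$ and $\mu|_{A_{s}} \ll \nu_{s}$, so the Radon-Nikodym chain rule gives
\begin{equation*}
\frac{d\mu|_{A_{s}}}{d\nu_{s}}(x,y) = \frac{1}{\rho_{X}(x)\rho_{Y}(y)}\cdot\frac{d\mu|_{A_{s}}}{d\mathscr{H}^{s}|_{A_{s}}}(x,y) = \frac{D_{s}(x,y)}{\omega_{s}\,\rho_{X}(x)\rho_{Y}(y)}.
\end{equation*}
Comparing with the expression for $L$ identifies $\alpha(s) = \omega_{s}/\pi$, giving $\alpha(1)=2/\pi$ (consistent with (\ref{LF_line_integral})) and $\alpha(2)=1$. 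The principal obstacle is the second step: Proposition~\ref{P_haurdof_ac} supplies absolute continuity but not the explicit formula for the derivative, so one must borrow the density property of $s$-rectifiable sets together with the standard measure-differentiation machinery to pin down the constant $\omega_{s}$. A minor care point is verifying that $\nu_{s}$ remains $\sigma$-finite despite $\rho_X\rho_Y$ being merely integrable; this follows by exhausting $A_s$ with subsets on which $\rho_X\rho_Y$ is bounded and $\mathscr{H}^s$ is finite.
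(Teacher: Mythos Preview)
Your argument is correct and arrives at the same constants $\alpha(2)=1$ and $\alpha(1)=2/\pi$ as the paper, but the route differs from the paper's own proof in the $s=1$ case. For $s=2$ both arguments coincide: $\nu_{2}\sim\mu_{XY}$ and one falls back on (\ref{LF}). For $s=1$, the paper does not invoke Besicovitch differentiation or the density theorem for rectifiable sets. Instead it uses the rectifiability conclusion of Preiss's theorem only to obtain a countable $\mu$-almost disjoint cover of $A_{1}$ by $C^{1}$ graphs $\gamma_{n}=\{(t,\varphi_{n}(t))\}$, then repeats verbatim the limit computation of Example~1 on each $\gamma_{n}$ to get $\tilde{\rho}(x,\varphi_{n}(x)) = 2J_{x}\rho_{X}(x)/\big(\pi\sqrt{1+[\varphi_{n}'(x)]^{2}}\big)$, and finally checks by a direct line-integral calculation that $\int_{B}(\pi/2)L\,d\nu_{1}=\mu(B)$.

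Your approach is cleaner and more conceptual: it isolates the identity $d\mu|_{A_{s}}/d\mathscr{H}^{s}|_{A_{s}} = D_{s}/\omega_{s}$ once and for all, so the two values of $s$ are handled uniformly and no explicit parametrization is needed. The paper's approach is more hands-on and self-contained (it reuses the computation already done in Example~1 rather than appealing to a further density theorem), but at the cost of implicitly assuming each $C^{1}$ piece can be written as a graph over the $x$-axis, which is a mild sloppiness your argument avoids. Either way, the substantive content---Preiss rectifiability plus identification of the density constant---is the same.
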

	\begin{proof}
		If $s = 2$ then $\nu \sim \mu_{XY}$, i.e., $\nu$ is equivalent to $\mu_{XY}$, and the result follows from (\ref{rho_epsilon_continuous}) with $\alpha(2) = 1$, as $\mu|_{A_{2}} \ll \mu_{XY}|_{A_{2}}$ and the Lift Function reduces to (\ref{LF}). If $s = 1$, then, by Preiss' theorem \cite{preiss1987}, there exists a countable cover $\{\gamma_{n}\}_{n\geq1}$ of $A_{1}$, of $1$-dimensional submanifolds of class one of $\mathbb{R}^{2}$. Denote $\gamma_{n} \coloneqq \{(t,\varphi_{n}(t)):t \in I_{n} \in \beta_{\mathbb{R}}\}$, in which $\varphi_{n} \in C^{1}(I_{n})$, and suppose that $\{\gamma_{n}\}_{n\geq1}$ are $\mu$-almost disjoint, i.e., $\mu(\gamma_{i} \cap \gamma_{j}) = 0, i \neq j$. Then, for $(x,y) \in \gamma_{n}$, analogously to (\ref{LF_functional}), we have that
		\begin{align}
		\label{rho_general_hausdorff1} \nonumber
		\tilde{\rho}(x,\varphi_{n}(x)) & = \lim\limits_{\epsilon \rightarrow 0} \epsilon^{2-s} \frac{\int_{Proj_{X}(\gamma_{n} \cap B_{\epsilon}(x,\varphi_{n}(x)))} \rho_{X}(x') J_{x'} \ dx'}{\pi \epsilon^{2}} \\
		& = \frac{2J_{x}\rho_{X}(x)}{\pi\sqrt{1 + [\varphi_{n}'(x)]^{2}}}
		\end{align}
		in which $J_{x'}$ is the Jacobian of the change of variables, i.e., such that the integral on the projection equals $\mu(B_{\epsilon}(x,y) \cap \gamma_{n})$. Now, for $B \in \beta_{\mathbb{R}^{2}}|_{A_{1}}$ and $\alpha(1) = 2/\pi$, (\ref{rho_general_hausdorff1}) yields  
		\begin{align*}
		\int_{B} (\pi/2) L(x,y) \ d\nu_{1}(x,y) & = \int_{B} (\pi/2) L(x,y) \rho_{X}(x)\rho_{Y}(y) \ d\mathscr{H}^{1}|_{A_{1}}(x,y)\\
		& = \sum_{n \geq 1} \int_{B \cap \gamma_{n}} (\pi/2) L(x,y) \rho_{X}(x)\rho_{Y}(y) \ d\sigma(x,y) \\
		& = \sum_{n \geq 1} \int_{Proj_{X}(B \cap \gamma_{n})} \rho_{X}(x) J_{x} \ dx \\
		& = \sum_{n \geq 1} \mu(B \cap \gamma_{n}) = \mu(B).
		\end{align*}
	\end{proof}
	
	Theorem \ref{theorem1_hausdorff} characterizes the Lift Function for a large class of joint distribution. Indeed, by Lebesgue's Decomposition Theorem (see \cite[p.~425]{billingsley2008} for more details), we may write $\mu = \mu_{1} + \mu_{2}$ in which $\mu_{1} \ll \mu_{XY}$ and $\mu_{2} \perp \mu_{XY}$, and, if the Hausdorff dimension of the support of $\mu_{2}$ $A_{1} \coloneqq supp \ \mu_{2}$, is one, then we may write
	\begin{equation*}
	L(x,y) = \frac{d\mu}{d\mu_{XY}} \mathds{1}\{(x,y) \in A_{1}^{c}\} + \frac{d\mu}{d\nu_{1}} \mathds{1}\{(x,y) \in A_{1}\}
	\end{equation*}
	which is defined for all $(x,y) \in A^{c}_{1}$ and all $(x,y) \in A_{1}$ such that the second Radon-Nikodym derivative $d\mu/d\nu_{1}(x,y)$ is well-defined, in which $\nu_{1}$ is given by (\ref{nu}). Therefore, unless $supp \ \mu_{2}$ is a \textit{fractal set}, i.e., has a non-integer Hausdorff dimension, or $A_{1}$ is such that $\mathscr{H}^{1}|_{A_{1}}$ is not $\sigma$-finite, the Lift Function is well-defined and can be calculated by means of line integrals. 
	
	We now consider more examples of the general Lift Function.
	
	\begin{example}
		\normalfont
		Let $\{\varphi_{n}\}_{n \geq 1}$ be such that $\varphi_{n} \in C^{1}(\mathbb{R})$ and if $i \neq j$ then $\{x \in \mathbb{R}: \varphi_{i}(x) = \varphi_{j}(x)\}$ is at most countable. Also, let $\{a_{n}\}_{n \geq 1}$ be such that $0 \leq a_{n} \leq 1$ and $\sum_{n \geq 1} a_{n} = 1$. Finally, let $X$ and $Y$ be absolutely continuous random variables, defined in $(\Omega,\mathcal{F},\mathbb{P})$, such that the conditional probability function of $Y$ given $X$ is
		\begin{equation*}
		\mathbb{P}(Y = y | X = x) = \begin{cases}
		a_{n}, & \text{ if } y = \varphi_{n}(x), n \geq 1 \\
		0, & \text{ otherwise}
		\end{cases}.
		\end{equation*}
		Then, by applying Theorem \ref{theorem1_hausdorff} we have that $L(x,y) = 0$ if $y \neq \varphi_{n}(x), \forall n \geq 1,$ and
		\begin{equation*}
		L(x,y) =  \frac{2a_{n}}{\pi \rho_{Y}(\varphi_{n}(x)) \sqrt{1 + [\varphi_{n}'(x)]^{2}}}
		\end{equation*}
		if $y = \varphi_{n}(x), n \geq 1$. Note that, if $y = \varphi_{n}(x)$ for more than one $n \geq 1$, then we may choose a \textit{version} of $L$ by choosing the constant $a_{n}$.
		\hfill$\square$
	\end{example}
	
	\begin{example}
		\normalfont
		We now consider an example in which there exists $s \in (1,2)$ such that $\mu(A_{s}) = 1$, i.e., the support of $\mu$ is a \textit{fractal} set. Let $X$ be absolutely continuous w.r.t. $\mathscr{L}_{1}$, $\mathbb{P}(X \in [0,1/2]) = 1$, and define $Y$ such that $\mathbb{P}(Y = w(X)) = 1$ in which
		\begin{align*}
		w(x) = \sum_{n=1}^{\infty} \frac{cos(2\pi3^{n}x)}{2^{n}} & & x \in [0,1/2].
		\end{align*}
		Function $w$, known as \textit{Weierstrass function}, was first proposed by \cite{weierstrass1895} (see \cite{edgar1993} for a translated version of \cite{weierstrass1895}) as an example of a continuous function nowhere differentiable. The graph of $w$  is presented in Figure \ref{w_function}.	
		
		From the above definition of $(X,Y)$ we have that the support of $\mu$ is the set $\gamma \coloneqq \{(x,w(x)): x \in [0,1/2]\}$, which is a \textit{fractal} set, i.e., its Hausdorff dimension is a non-integer between $1$ and $2$. In fact, the actual Hausdorff dimension of $\gamma$ was unknown until recently, when \cite{shen2018} showed that it is indeed $2 - \log 2/\log 3 \approx 1.369$, which was a long standing conjecture. Note that, even though $Y$ is also absolutely continuous w.r.t. $\mathscr{L}_{1}$, there is no direct way of calculating the Lift Function (\ref{LF_general}) of $(X,Y)$ by the methods presented in this paper, i.e., there is no straightforward manner of calculating the limit (\ref{limit_rho}).	
		\hfill$\square$
		
		\begin{figure}
			\vspace{6pc}
			\includegraphics[width = \linewidth]{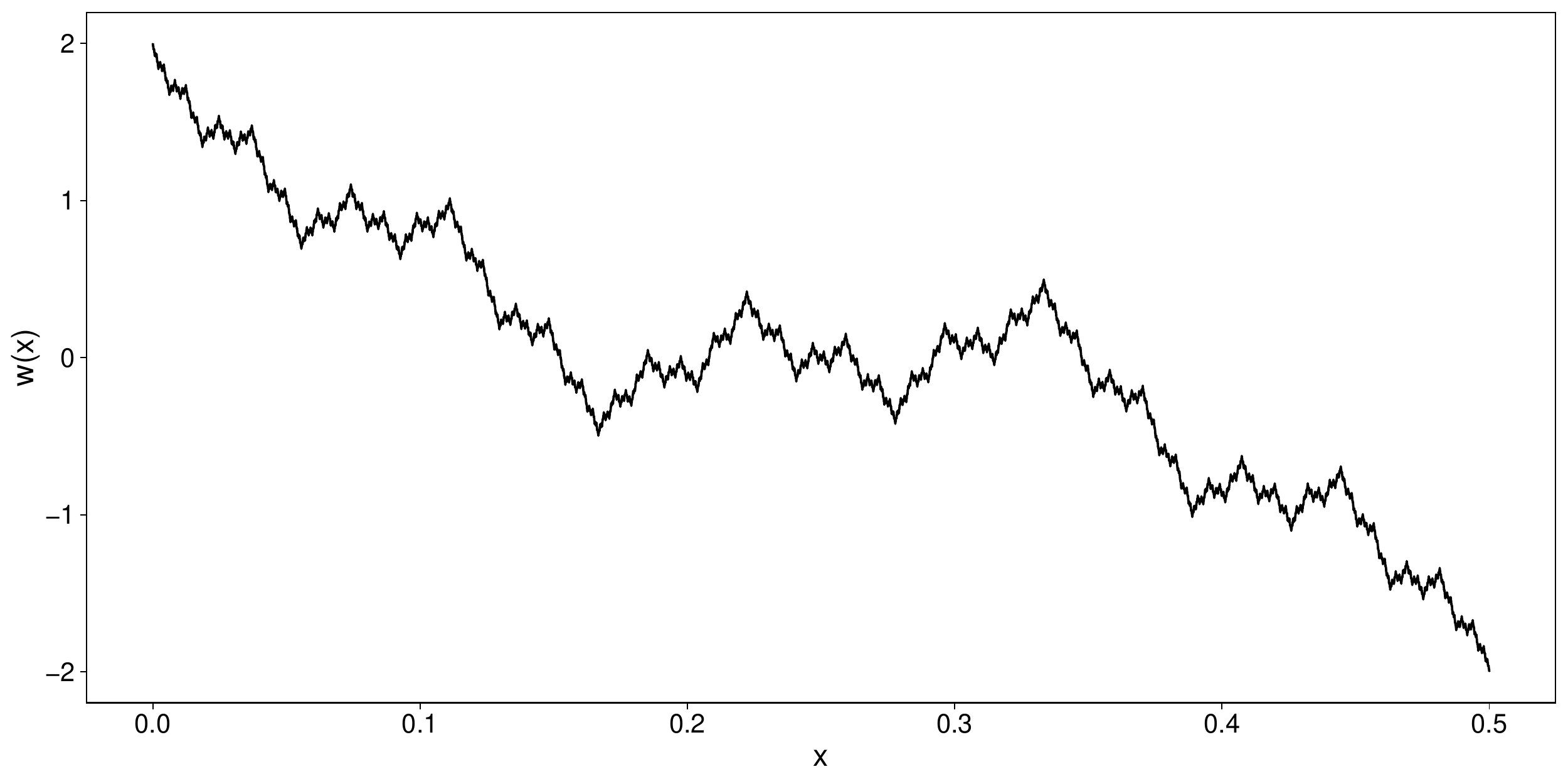}
			\caption[]{Weierstrass function for $x \in [0,1/2]$.}
			\label{w_function}
		\end{figure}
	\end{example}
	
	\section{Mutual Information}
	\label{Sec4}
	
	Although the Lift Function gives a high \textit{resolution}, wide and pointwise view of the form of dependence between two random variables, it may be of interest to assess their dependence \textit{globally}. A global quantifier of dependence is given by the Mutual Information
	\begin{equation}
	\label{MI}
	I(X,Y) = \int_{\mathbb{R}^{2}} \log L_{(X,Y,\mathbb{P})}(x,y) \ d\mu(x,y)
	\end{equation}
	when the Lift Function is defined. When $\mu \ll \mu_{XY}$, the Mutual Information is the mean deviation (in the logarithm scale) of the conditional distribution of $Y$ given $X$ and the marginal distribution of $Y$, quantifying the dependence between the variables. When $\mu \not\ll \mu_{XY}$ it may not be clear what $I(X,Y)$ means, for the following reasons.
	
	First of all, in some cases a limit cannot be interchanged with the integral in (\ref{MI}). For example, suppose that $\{(X_{n},Y_{n})\}_{n \geq 1}$ is a sequence of random vectors such that $(X_{n},Y_{n})$ has the Standard Bivariate Normal Distribution with correlation $r_{n} \xrightarrow{n \rightarrow \infty} 1$. Then
	\begin{equation*}
	I(X_{n},Y_{n}) = -\frac{1}{2} \log \Big(1 - r_{n}^{2}\Big) \xrightarrow{n \rightarrow \infty} + \infty
	\end{equation*}
	but $(X_{n},Y_{n}) \xrightarrow{\mathcal{L}} (X,Y)$, with $X \sim N(0,1)$ and $\mathbb{P}(Y = X) = 1$, so that, by (\ref{LF_functional}),
	\begin{align*}
	I(X,Y) & = \log \frac{\sqrt{2}}{\pi} - \int_{\mathbb{R}} \log\big(\rho_{X}(x)\big) \rho_{X}(x) \ dx \\
	& = \log \frac{\sqrt{2}}{\pi} + \log(\sqrt{2\pi e}) \\
	& = \log \frac{2\sqrt{e}}{\sqrt{\pi}}	
	\end{align*}
	and therefore $\lim\limits_{n \rightarrow \infty} I(X_{n},Y_{x}) \neq I\big(\lim\limits_{n \rightarrow \infty} X_{n},\lim\limits_{n \rightarrow \infty} Y_{n}\big)$, in which the limits in the right-hand side is in law. Indeed, not even the limit of the Lift Function is the Lift Function of the limit as
	\begin{equation*}
	L_{(X_{n},Y_{n})} = (1-r_{n}^{2})^{-1/2} \exp\Big(- \frac{1}{2(1 - r_{n}^{2})} \big[ x^{2} + y^{2} - 2r_{n}xy\big] + \frac{x^{2} + y^{2}}{2} \Big)
	\end{equation*}
	which converges to zero if $x \neq y$, but diverges when $x = y$, as $n \rightarrow \infty$.
	
	Hence, the Mutual Information is in general not a suitable scale for \textit{comparing} the dependence inside two random vectors in general scenarios. Of course, it has a lot of qualities, especially when dealing with random variables absolutely continuous w.r.t. counting measure, which is evident by the extend and reach of its applications, specially in Information Theory. Nevertheless, when calculated in more general setups, as when the variables are absolutely continuous w.r.t. Lebesgue measure, it is not an appropriate scale to compare the dependence inside random vectors, as random vectors \textit{more dependent} in a natural way, e.g, when one variable is a function of the other, may have a smaller Mutual Information than random vectors which are not \textit{as dependent} in some sense.
	
	On the other hand, when we compare the dependence inside random vectors by comparing their Lift Function we capture in more detail the nuances of it. This may be done by observing regions in which the Lift Function is greater than $1$ (lift regions) and regions in which it is lesser than $1$ (inhibition regions). The \textit{patterns} observed in a Lift Function present in more detail the form of dependence between random variables than a global quantifier as the Mutual Information.
	
	As an example, suppose that $(X_{1},Y_{1})$ follows a standard Bivariate Normal Distribution with correlation $0.6$ and $(X_{2},Y_{2})$ follows a Circular Bivariate Cauchy Distribution (see \cite{ferguson1962} for more details), whose Lift Functions are represented by their \textit{contours} and \textit{heatmaps} in Figure \ref{heatmaps}. On the one hand, we see that the \textit{lift regions} of $(X_{1},Y_{1})$ are concentrated around the line with slope $1$ and intercept $0$. On the other hand, the \textit{lift regions} of $(X_{2},Y_{2})$ are outside two hyperbolas.
	
	\begin{figure}[ht]
		\vspace{6pc}
		\includegraphics[width = \linewidth]{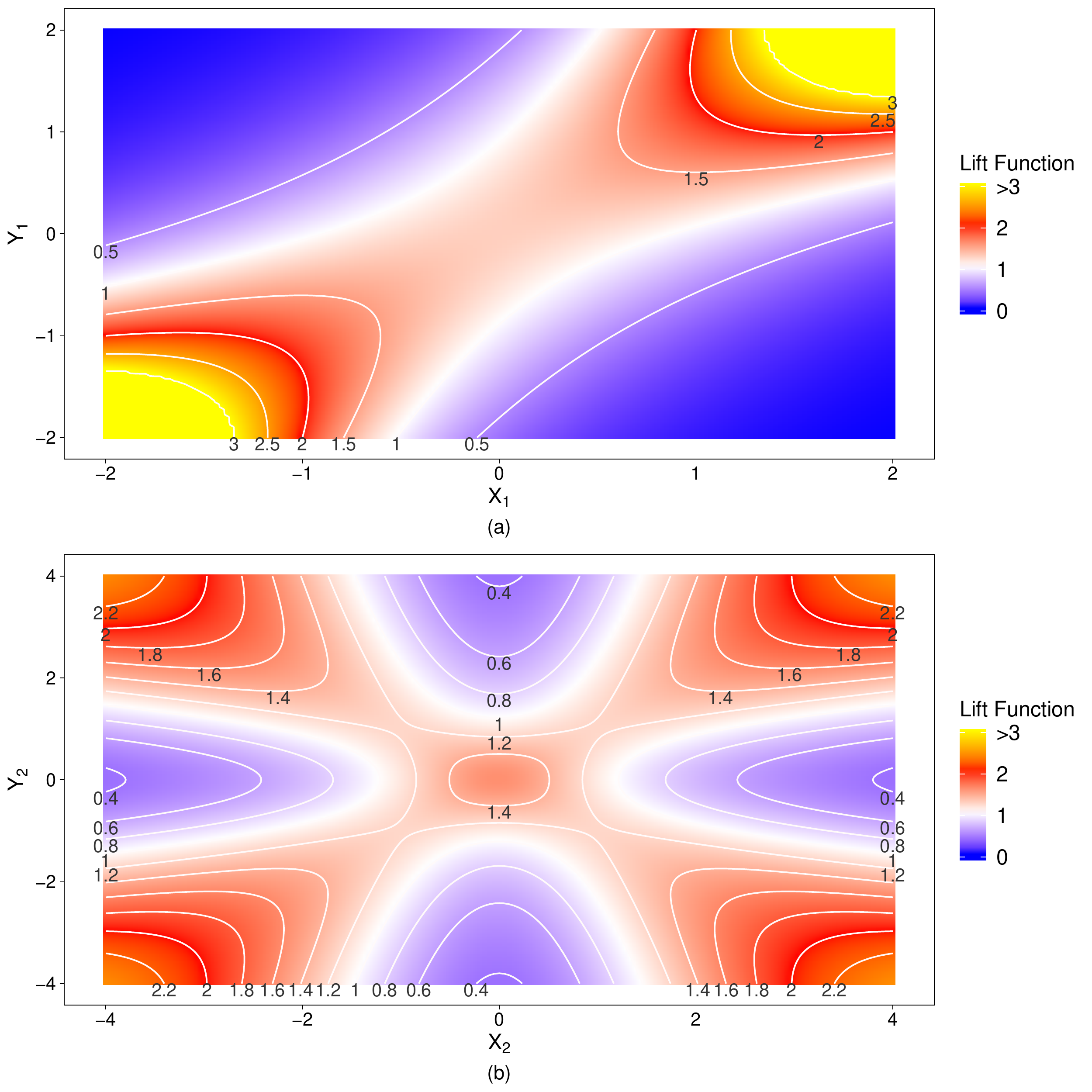}
		\caption[]{\footnotesize (a) Heatmap of the Lift Function of a standard Bivariate Normal Distribution with correlation $0.6$. (b) Heatmap of the Lift Function of a Circular Bivariate Cauchy Distribution, i.e., the bivariate distribution with joint density $f(x,y) = 1/2\pi(1 + x^{2} + y^{2})^{3/2}$ for $(x,y) \in \mathbb{R}^{2}$.}
		\label{heatmaps}
	\end{figure}
	
	From the Lift Functions we see that the dependence between $X_{1}$ and $Y_{1}$ is \textit{linear}, as $\{X_{1} = x\}$ lifts the event $\{Y_{1} = x \pm \epsilon\}$, for a $\epsilon > 0$, as the region around the line with slope $1$ and intercept $0$ has Lift Function greater than one. However, the dependence between $X_{2}$ and $Y_{2}$ has more nuances as their Lift Function has a more complex pattern. For example, $\{X = x\}$, for $|x| > 2$, lifts the event $\{Y \in A\}$ in which $A = [-4,-4+\epsilon] \cup [4-\epsilon,4]$ for $\epsilon > 0$, so the  \textit{four corners} of $[-4,4]^{2}$ are \textit{lift regions} of $(X_{2},Y_{2})$; on the other hand, $\{X = x\}$, for $|x| < 1$, lifts the event $\{Y \in B\}$ in which $B$ is an interval centred at the origin.
	
	Even though $I(X_{1},Y_{1}) = I(X_{2},Y_{2}) = 0.223$, the form of dependence between $X_{1}$ and $Y_{1}$, and between $X_{2}$ and $Y_{2}$ are quite different, which is evidence that, in order to really \textit{understand} the dependence between random variables we should study it locally, instead of globally: and the Lift Function is an useful tool for such study, as it may represent \textit{multiple forms} of dependence, as evidenced in Figure \ref{heatmaps}.

	\section{Final Remarks}
	\label{Sec5}
	
	This paper defines a local and general dependence quantifier, the Lift Function, which may be applied to asses the dependence between two random variables in rather general cases. When comparing with other local dependence quantifiers in the literature (see \cite{bairamov,bjerve,holland,jones1996,lehmann1966,sibuya} for example), the Lift Function is not restricted to the study of a specific form of dependence (as linear \cite{bairamov} and quadrant \cite{lehmann1966,sibuya} dependence); does not need the notion of \textit{regression curve}, which is considered by \cite{bjerve,jones1996} to study the dependence that may summarized as the ``proportion of variance explained by regression curve''; it may be applied to a large class of random variables, as opposed to \cite{holland} for example; and it may be more straightforward to interpret the relation between the variables by studying the patterns of the Lift Function, as can be established from Figure \ref{heatmaps}.
	
	From an applied point of view, the Lift Function is quite relevant in statistics in the scenario in which we apply a \textit{treatment} to a sample, observe an \textit{answer} and want to \textit{maximize} the expected number of specific answers in the sample. For example, suppose that the answer for our treatment is a random variable $Y$ absolutely continuous w.r.t. counting measure and that we want to maximize the frequency of an specific answer in the sample, say $\{Y = 1\}$. On the one hand, if we apply the treatment to a random sample of size $n$ we will expect $n \times \mathbb{P}(Y = 1)$ desired answers. On the other hand, suppose there is another random variable $X$, also absolutely continuous w.r.t. counting measure, that represents the \textit{profile} of the sample units and that $L(x,y)$ is known. Then, in order to maximize the number of desired answers in the sample we may apply the treatment not to \textit{any} $n$ sample units, but rather to $n$ sample units with profile $x_{opt} = \arg \max\limits_{x \in \mathbb{R}} L(x,1)$. In this framework, instead of expecting $n \times \mathbb{P}(Y = 1)$ desired answers, we will expect $n \times \mathbb{P}(Y = 1 \mid X = x_{opt})$, which is $[L(x_{opt},1) - 1] \times n$ more answers. This example may be extended to the continuous case, in which we want to maximize the answers in a subset of $\mathbb{R}$ and may choose profiles also in a subset of $\mathbb{R}$. For an application of the Lift Function in statistics see \cite{marcondes2018}.
	
	Nevertheless, the Lift Function is of great relevance not only for market segmentation or identification of prone individuals in a population, but also for the study of the dependence between two variables. Indeed, from the patterns of the Lift Function, it is possible to analyse the \textit{raw} dependence between variables, without restricting it to a specific kind of dependence, nor making assumptions about it. 
	
	We leave a few compelling topics for future research. From a statistical standpoint, besides the application of the Lift Function in specific cases, it would be interesting to develop estimation techniques for the Lift Function for the case in which $X$ and $Y$ are absolutely continous w.r.t. counting measure, and when their joint distribution is absolutely continuous w.r.t. Lebesgue measure. From a more theoretical standpoint, it would be interesting to study the Lift Function for the case in which $X$ and $Y$ are absolutely continuous, but the support of the singular part of their joint distribution is a \textit{fractal set}. Also, one could study the cases in which the Lift Function is not defined in order to propose a more general definition to it. Finally, it could also be possible to model the dependence between random variables by modelling their Lift Function.
	
	We believe that this paper contributes to the state-of-art of variable dependence assessment, proposing a quite general local dependence quantifier which is applicable to a large class of joint probability distributions. We believe that there are much more facets to the Lift Function which should be explored, and that it could be of use not only to assess variable dependence, but for applications in areas such Stochastic Processes and Machine Learning (see \cite{marcondes2018} for example).
	
	\section*{Acknowledgements}
	We would like to thank D. Tausk for some suggestions which improved our paper, and J. Barrera for the partnership in the applied research of the Lift Function which originated this abstract theoretical presentation of it.
		
	\bibliographystyle{plain}      
	\bibliography{Ref}   
	
\end{document}